\numberwithin{equation}{section}
\newtheorem{theorem}[equation]{Theorem}
\newtheorem{proposition}[equation]{Proposition}
\newtheorem{lemma}[equation]{Lemma}
\newtheorem{remark}[equation]{Remark}
\newcommand{\N}{\mathbbmss{N}}
\newcommand{\Z}{\mathbbmss{Z}}
\newcommand{\Q}{\mathbbmss{Q}}
\newcommand{\ndiv}{\not|\,}
\renewcommand{\mid}{\,|\,}
\title[{Factoring polynomials in $\Z[[x]]$}]{Factoring polynomials in the ring of formal power series over $\Z$}
\author{Daniel Birmajer}
\thanks{D. Birmajer would like to acknowledge the support and hospitality of the Universidad Nacional de San Luis, San Luis, Argentina}
\address{Department of Mathematics\\ Nazareth College\\ 4245 East Ave.\\ Rochester, NY 14618}
\email{abirmaj6@naz.edu}
\author{Juan B. Gil}
\address{Penn State Altoona\\ 3000 Ivyside Park\\ Altoona, PA 16601.}
\email{jgil@psu.edu}
\author{Michael Weiner}
\address{Penn State Altoona\\ 3000 Ivyside Park\\ Altoona, PA 16601.}
\email{mdw8@psu.edu}
\subjclass[2010]{13F25;11Y05,13P05}
\begin{document}

\begin{abstract}
We consider polynomials with integer coefficients and discuss their factorization properties in $\Z[[x]]$, the ring of formal power series over $\Z$. We treat polynomials of arbitrary degree and give sufficient conditions for their reducibility as power series. Moreover, if a polynomial is reducible over $\Z[[x]]$, we provide an explicit factorization algorithm. For polynomials whose constant term is a prime power, our study leads to the discussion of $p$-adic integers.
\end{abstract}

\maketitle

\section{Introduction}

In this paper we consider polynomials with integer coefficients and discuss their factorization as elements of $\Z[[x]]$, the ring of formal power series over $\Z$. We treat polynomials of arbitrary degree and give sufficient conditions for their reducibility in $\Z[[x]]$. For polynomials of degree two or three these conditions are also necessary. 
\par
If the constant term of the polynomial is not a prime power, the reducibility discussion is straightforward.  We briefly address these cases in Section~\ref{sec:preliminaries}. On the other hand, if the constant term of the polynomial is a nontrivial prime power, say $p^n$ with $p$ prime and $n\ge 2$, the question of reducibility in $\Z[[x]]$ leads in some cases (when $n$ is no greater than twice the $p$-adic valuation of the linear coefficient) to the discussion of $p$-adic integers. In this context, our main result is that, if such a polynomial has a root in $\Z_p$ whose $p$-adic valuation is positive, then it is reducible in $\Z[[x]]$. This particular case is presented in Section~\ref{sec:Main}. Our proofs are constructive and provide explicit factorization algorithms. 
\par
It is important to note that irreducible elements in $\Z[x]$ and in $\Z[[x]]$ are, in general, unrelated. For instance, $6+x+x^2$ is irreducible in $\Z[x]$ but can be factored in $\Z[[x]]$, while $2+7x+3x^2$ is irreducible in $\Z[[x]]$ but equals $(2+x)(1+3x)$ as a polynomial. Observe that the latter is not a proper factorization in $\Z[[x]]$ since $1+3x$ is an invertible element.  More examples and a few remarks concerning the hypotheses of our main theorem are given in Section~\ref{sec:further}. For illustrative purposes, we explicitly discuss our results in the context of cubic polynomials.
\par
A first study of the factorization theory of quadratic polynomials in $\Z[[x]]$ was presented in \cite{BGW}.  The results of this paper contain and expand those obtained in the quadratic case to polynomials in $\Z[x]$ of arbitrary degree. All the needed material regarding the ring $\Z_p$ of $p$-adic integers can be found in \cite{Katok, Serre}.

\section{Preliminaries}
\label{sec:preliminaries}

We start by reviewing some of the basic properties of the factorization theory of power series over $\Z$. For a more extensive discussion, we refer the reader to \cite{BiGi}. 

Let $f(x)=f_0+f_1 x+\cdots+f_d x^d$ be a polynomial with integer coefficients. It is easy to check that $f(x)$ is invertible in $\Z[[x]]$ if and only if $f_0=\pm 1$.

We say that $f(x)$ is reducible in $\Z[[x]]$ if there exist power series,
\begin{equation*}
A(x)=\sum_{k=0}^\infty a_k x^k \text{ and } B(x)=\sum_{k=0}^\infty b_k x^k \;\text{ with } a_k,\;b_k\in\Z,
\end{equation*}
such that $a_0\not=\pm 1$, $b_0\not=\pm 1$, and $f(x)=A(x)B(x)$. 

A proof of the following basic proposition can be found in \cite{BiGi}.

\begin{proposition} 
Let $f(x)$ be a non-invertible polynomial in $\Z[[x]]$ .
\begin{enumerate}[$(a)$]
\item If $f_0$ is prime, then $f(x)$ is irreducible in $\Z[[x]]$.
\item If $f_0$ is not a prime power, then $f(x)$ is reducible in $\Z[[x]]$.
\item If $f_0=p^n$ with $p$ prime, $n\ge 2$, and $p\ndiv f_1$, then $f(x)$ is irreducible in $\Z[[x]]$.
\end{enumerate}
\end{proposition}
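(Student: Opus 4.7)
The plan is to handle the three parts separately, with part (b) requiring the most substantial argument since it demands an explicit construction, while (a) and (c) are short obstructions based on looking at the coefficients of $x^0$ and $x^1$.

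For part (a), I would observe that any factorization $f(x) = A(x)B(x)$ in $\Z[[x]]$ forces $a_0 b_0 = f_0$. When $f_0 = \pm p$ is (up to sign) a prime, then one of $a_0, b_0$ is $\pm 1$, and by the unit criterion already noted ($f$ invertible iff constant term is $\pm 1$), the corresponding factor is a unit. Hence no proper factorization exists.

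For part (b), I would write $f_0 = mn$ with $\gcd(m,n)=1$, $|m|,|n|\neq 1$ (possible because $f_0$ is not a prime power), and then build power series
\[
A(x)=m+\sum_{k\ge 1} a_k x^k, \qquad B(x)=n+\sum_{k\ge 1} b_k x^k,
\]
satisfying $A(x)B(x)=f(x)$ by solving the coefficient equations recursively. The equation at $x^k$ reads
\[
m b_k + n a_k = f_k - \sum_{i=1}^{k-1} a_i b_{k-i},
\]
and since $\gcd(m,n)=1$ this is always solvable in $\Z$ by B\'ezout (the right-hand side is known once $a_1,\dots,a_{k-1}$ and $b_1,\dots,b_{k-1}$ are fixed). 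This yields a proper factorization in $\Z[[x]]$, even though the resulting $A$ and $B$ will generally be genuine power series, not polynomials. I would mention that the recursion is the backbone of the explicit factorization algorithm advertised in the introduction.

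For part (c), I would argue by contradiction. Assume $f(x) = A(x)B(x)$ is a proper factorization. From $a_0 b_0 = p^n$ and the fact that neither $a_0$ nor $b_0$ is $\pm 1$, both must be divisible by $p$. Reading off the coefficient of $x$ gives
\[
f_1 = a_0 b_1 + a_1 b_0,
\]
so $p \mid f_1$, contradicting the hypothesis $p \nmid f_1$. The main (minor) obstacle across all three parts is just keeping track of signs in (a) and (c), which is handled by the observation that the units of $\Z[[x]]$ are precisely the series with constant term $\pm 1$; otherwise the arguments are elementary and the only genuinely constructive piece is the B\'ezout-based recursion in (b).
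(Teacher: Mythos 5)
Your proof is correct and is essentially the expected argument: the paper does not prove this proposition itself but defers to \cite{BiGi}, where the proof runs exactly as yours does --- comparing constant terms and using that the units of $\Z[[x]]$ are the series with constant term $\pm 1$ for (a) and (c), and solving the recursion $mb_k+na_k=f_k-\sum_{i=1}^{k-1}a_ib_{k-i}$ via B\'ezout for (b). The only caveat is that your decomposition $f_0=mn$ with $\gcd(m,n)=1$ and $|m|,|n|\neq 1$ tacitly assumes $f_0\neq 0$ (note that $f(x)=x$ is non-invertible with $f_0=0$ not a prime power, yet is irreducible in $\Z[[x]]$), a hypothesis that the cited source makes explicit and that is harmless in the context of this paper.
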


As an immediate consequence, we have:  

\begin{proposition}[Linear polynomials]
A polynomial 
  \begin{equation*}
    f(x)=p^n+f_1 x
  \end{equation*}
with $n\ge 2$ is reducible in $\Z[[x]]$ if and only if $p\mid f_1$.
\end{proposition}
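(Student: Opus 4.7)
The plan is to prove both implications as quick consequences of the preceding proposition; indeed, the authors' phrase ``as an immediate consequence'' signals that no new machinery is needed.

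For the ``only if'' direction, I would argue by contraposition using part $(c)$ of the previous proposition. Suppose $p \ndiv f_1$. Since $f_0 = p^n$ with $p$ prime and $n \ge 2$, part $(c)$ immediately gives that $f(x) = p^n + f_1 x$ is irreducible in $\Z[[x]]$. Therefore, if $f(x)$ is reducible, we must have $p \mid f_1$.

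For the ``if'' direction, assume $p \mid f_1$ and write $f_1 = p m$ for some $m \in \Z$. Then I would simply exhibit the factorization
\begin{equation*}
  f(x) = p^n + p m\, x = p \cdot \bigl(p^{n-1} + m x\bigr).
\end{equation*}
To confirm this is a proper factorization in $\Z[[x]]$, I would invoke the characterization of units recalled at the start of Section~\ref{sec:preliminaries}: a power series is invertible in $\Z[[x]]$ exactly when its constant term is $\pm 1$. The first factor has constant term $p \neq \pm 1$, and the second has constant term $p^{n-1}$, which satisfies $|p^{n-1}| \ge p \ge 2$ because $n \ge 2$, hence is also not $\pm 1$. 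Thus neither factor is a unit, and $f(x)$ is reducible.

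There is essentially no obstacle here: both directions are one-line applications of previously stated facts. The only very minor point that needs to be highlighted explicitly is the use of the hypothesis $n \ge 2$ to ensure that $p^{n-1}$ is not a unit; this is what makes the displayed factorization a genuine (proper) factorization rather than a trivial extraction of a unit.
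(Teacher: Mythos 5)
Your proof is correct and matches the paper's intent: the paper states this proposition without proof as an ``immediate consequence'' of the preceding proposition, and your two steps (part $(c)$ for the ``only if'' direction, and factoring out $p$ to get $p\cdot(p^{n-1}+mx)$ with both constant terms $p$ and $p^{n-1}$ non-units since $n\ge 2$ for the ``if'' direction) are exactly the intended argument.
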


It remains to examine the reducibility of nonlinear polynomials of the form
\begin{equation}\label{def:polynomial}
f(x)=p^n + p^m\gamma_1 x + \gamma_2 x^2 +\cdots+ \gamma_dx^d, \quad \gamma_d\not=0,
\end{equation}
with $p$ prime, $n\ge 2$, $m\geq 1$, $\gcd(p,\gamma_1)=1$ or $\gamma_1=0$, and $\gcd(p,\gamma_2,\dots,\gamma_d)=1$. 

As it turns out, the reducibility of such polynomials depends on the relation between the parameters $n$ and $m$ in \eqref{def:polynomial} and on their factorization properties as elements of $\Z_p[x]$. Accordingly, we divide our study into two cases: The case when $n>2m$, see Proposition \ref{case:n>2m}, and the case when $n\le 2m$, which is more involved and will be investigated in the next section. 

\begin{remark}
If a polynomial of the form \eqref{def:polynomial} has no linear term, i.e. if $\gamma_1=0$, we can assume $m$ as large as needed and can therefore discuss its reducibility as for the case when $n\le 2m$.
\end{remark}

\begin{proposition}\label{case:n>2m}
If $n>2m$ and $\gcd(p,\gamma_1)=1$, then the polynomial $f(x)$ in \eqref{def:polynomial} is reducible in both $\Z[[x]]$ and $\Z_p[x]$.
\end{proposition}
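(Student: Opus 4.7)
The plan is to prove the two reducibility claims separately, both leaning on the $p$-adic gap $n > 2m$ between the constant and linear coefficients. For $\Z_p[x]$, I will read the factorization off the Newton polygon of $f$ over $\Q_p$. Among the marked points $(0,n)$, $(1,m)$, and $(k, v_p(\gamma_k))$ for $k \ge 2$, the hypothesis $\gcd(p,\gamma_2,\ldots,\gamma_d)=1$ forces some $\gamma_k$ (with $k\ge 2$) to have valuation zero, and the inequality $n > 2m$ makes the segment from $(0,n)$ to $(1,m)$ strictly steeper than any edge issuing from $(1,m)$ onward. The initial polygon edge therefore has length exactly one, producing a root $\alpha \in \Z_p$ of valuation $n-m\ge 1$, and polynomial division yields a proper factorization $f(x) = (x-\alpha)\,q(x)$ in $\Z_p[x]$, with $q(0)$ of valuation $m$.

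For $\Z[[x]]$, my plan is to construct $A,B \in \Z[[x]]$ with $A(0)=p^m$ and $B(0)=p^{n-m}$ satisfying $AB=f$, by determining the coefficients recursively. Writing $A = p^m + \sum_{k\ge 1} a_k x^k$ and $B = p^{n-m} + \sum_{k\ge 1} b_k x^k$, matching the coefficient of $x^k$ in $AB=f$ produces a single linear relation
\[
p^m b_k + p^{n-m} a_k = R_k,
\]
where $R_k \in \Z$ is determined by the previously chosen $a_j, b_j$ with $j<k$. Since $n-m>m$, this has integer solutions if and only if $p^m \mid R_k$; when it does, one of $a_k, b_k$ remains free and the other is determined. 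The main difficulty is therefore to ensure $p^m \mid R_k$ at every stage.

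I plan to handle this by induction, using the free parameter $a_k$ at stage $k$ to control the residue of $R_{k+1}$ modulo $p^m$. A direct expansion shows that for $k \ge 2$, $R_{k+1}$ is an affine function of $a_k$ whose slope is congruent to $-\gamma_1$ modulo $p$; since $\gcd(p,\gamma_1)=1$ this slope is a unit in $\Z/p\Z$, so the congruence $R_{k+1}\equiv 0 \pmod{p^m}$ can be solved for $a_k$ modulo $p^m$. The base case $k=1$, in which $R_2$ is quadratic in $a_1$, is handled by a single application of Hensel's lemma, whose hypothesis again reduces to $p\nmid\gamma_1$. Iterating yields the desired $A,B \in \Z[[x]]$; since $p^m$ and $p^{n-m}$ both differ from $\pm 1$, this is a proper factorization.
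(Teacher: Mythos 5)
Your proposal is correct. The $\Z[[x]]$ half is essentially the paper's own argument in different notation: you match coefficients to get $p^m b_k + p^{n-m}a_k = R_k$, solve the quadratic congruence $R_2\equiv 0 \pmod{p^m}$ for $a_1$ via Hensel (the paper does this through the discriminant of $h(x)=\gamma_2-\gamma_1x+p^{n-2m}x^2$, which is a unit square mod $p$), and then exploit that $R_{k+1}$ is affine in $a_k$ with slope $\gamma_1-2p^{n-2m}a_1\equiv\gamma_1\pmod p$, a unit, exactly as in the paper's inductive step. The only genuine divergence is in the $\Z_p[x]$ half: you read off a degree-one factor from the length-one initial edge of the Newton polygon (slope $-(n-m)$, forced by $n>2m$), whereas the paper applies Hensel's lemma at the point $p^n$, using $v_p(f(p^n))\ge n>2m=2v_p(f'(p^n))$; both yield the same root of valuation $n-m$, and the choice is a matter of taste. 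If you keep the Newton polygon route, you should say a word about why a length-one edge of integer slope produces a root in $\Q_p$ itself (Galois stability of the unique root of that valuation), and note that the observation about some $\gamma_k$ having valuation zero is not actually needed for that edge to be the initial one.
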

\begin{proof}
First of all, observe that $f(p^n)\equiv 0\pmod{p^n}$, and $f'(p^n)\equiv 0\pmod{p^{m}}$ because $n>m$. Moreover, since $n>2m\ge m+1$ and $\gcd(p,\gamma_1)=1$, we have $f'(p^n)\not\equiv 0\pmod{p^{m+1}}$. Then, by Hensel's lemma, $p^n$ lifts to a root of $f(x)$ in $\Z_p$. Thus $f(x)$ is reducible in $\Z_p[x]$.

To show the reducibility of $f(x)$ in $\Z[[x]]$, we give an inductive procedure to find $a_k,b_k\in\Z$ such that
\begin{equation*}
 f(x)=\big(p^m+a_1 x+a_2 x^2+\cdots\big)\big(p^{n-m}+b_1 x+b_2 x^2+\cdots\big).
\end{equation*}
First of all, observe that $n>2m$ implies $n-m>m$.  As a first step, we need
\begin{align*}
p^m \gamma_1 &= p^m (b_1+ p^{n-2m}a_1)\\ 
\gamma_2 &= p^m s_2 +a_1(\gamma_1-p^{n-2m}a_1)
\end{align*}
with $s_2=b_2+p^{n-2m}a_2$. If we let $h(x)=\gamma_2-\gamma_1 x+p^{n-2m}x^2$, then solving the last equation is equivalent to finding $a_1\in\Z$ such that $h(a_1)\equiv 0 \!\pmod{p^m}$. Since the discriminant of $h(x)$ is a square mod $p$, there exists $r\in\Z_p$ such that $h(r)=0$. We let $a_1\in\Z$ be the reduction of $r$ mod $p^m$ and let $s_2=h(a_1)/p^m$. 

With the notation $s_j=b_j+p^{n-2m}a_j$, the rest of the equations become
\begin{align*}
\gamma_3& = p^m s_3 + a_2(\gamma_1-2p^{n-2m}a_1) + a_1s_2 \\
\gamma_4 &= p^m s_4 +a_3(\gamma_1-2p^{n-2m}a_1)+ a_1s_3 + a_2b_2 \\[-1ex] 
& \;\; \vdots \hspace*{9em} \vdots \\[-1ex]
\gamma_d &= p^m s_d +a_{d-1}(\gamma_1-2p^{n-2m}a_1)+ a_1s_{d-1}+a_2 b_{d-2}+\cdots+ a_{d-2} b_2 \\
0 &= p^m s_{d+1} +a_{d}(\gamma_1-2p^{n-2m}a_1)+ a_1s_{d}+a_2 b_{d-1}+\cdots+ a_{d-1} b_2 \\[-1ex]
& \;\; \vdots \hspace*{9em} \vdots \\[-1ex]
0 &= p^m s_{k+1} +a_k(\gamma_1-2p^{n-2m}a_1)+ a_1 s_k + a_2 b_{k-1}+\cdots+ a_{k-1} b_2 \\[-1ex]
& \;\; \vdots \hspace*{9em} \vdots
\end{align*}
Since $\gamma_1-2p^{n-2m}a_1$ is not divisible by $p$, there are integers $a_2$ and $s_3$ such that the equation for $\gamma_3$ is satisfied. With the same argument, one can proceed inductively to solve for every pair $a_k, s_{k+1}\in\Z$.
\end{proof}

In the next section, we will make use of the following elementary result:
\begin{lemma}[Theorem~1.42 in \cite{Katok}] \label{rootinZp}
A polynomial with integer coefficients has a root in $\Z_p$ if and only if it has an integer root modulo $p^k$ for any $k\ge 1$.
\end{lemma}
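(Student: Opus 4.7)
The plan is to prove the two implications separately. For the forward direction, I would use that $\Z$ is dense in $\Z_p$: if $\alpha\in\Z_p$ satisfies $f(\alpha)=0$, then for each $k\ge 1$ I can choose an integer $a_k$ with $a_k\equiv \alpha\pmod{p^k}$ (the truncation of the $p$-adic expansion of $\alpha$ works). Since $f$ has integer coefficients, polynomial evaluation is compatible with congruence, so $f(a_k)\equiv f(\alpha)=0\pmod{p^k}$, which gives an integer root of $f$ modulo $p^k$.

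The reverse direction is the substantive one. Suppose that for every $k\ge 1$ there exists an integer $a_k$ with $f(a_k)\equiv 0\pmod{p^k}$. Viewing $(a_k)$ as a sequence in the compact space $\Z_p$, I would extract a convergent subsequence $a_{k_j}\to \alpha\in \Z_p$. By continuity of $f$ as a polynomial, $f(a_{k_j})\to f(\alpha)$; on the other hand the hypothesis gives $|f(a_{k_j})|_p\le p^{-k_j}\to 0$, so $f(\alpha)=0$ in $\Z_p$.

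The main subtlety---and the step I expect to require the most care---is precisely this passage from the incoherent collection $\{a_k\}$ to an actual $p$-adic root: the integers $a_k$ need not agree modulo lower powers of $p$, so one cannot simply take a limit of the raw sequence in $\Z_p$. Compactness of $\Z_p$ resolves this, and the same point can be made equivalently by a K\"onig's lemma argument on the tree whose level-$k$ vertices are the finite nonempty sets $S_k=\{a\in \Z/p^k\Z : f(a)\equiv 0\pmod{p^k}\}$, with edges given by the reduction maps $S_{k+1}\to S_k$: an infinite branch through this finitely branching tree produces a coherent sequence $(r_k)$ with $r_k\in S_k$ and $r_{k+1}\equiv r_k\pmod{p^k}$, which assembles into a root $\alpha\in \Z_p$.
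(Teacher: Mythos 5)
Your proof is correct, and both directions are handled properly: the forward direction via truncation and compatibility of polynomial evaluation with congruences, and the reverse direction via sequential compactness of $\Z_p$ (equivalently, the K\"onig's lemma argument on the tree of roots modulo $p^k$, which is essentially the pigeonhole construction used in the cited source). The paper itself offers no proof---it simply cites Theorem~1.42 of Katok---and your argument is the standard one, with the genuine subtlety (that the integers $a_k$ need not be coherent modulo lower powers of $p$) correctly identified and resolved.
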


\section{Factorization in the presence of a $p$-adic root}
\label{sec:Main}

In this section, we finish our discussion of reducibility in $\Z[[x]]$ for polynomials with integer coefficients. Our results rely on the existence of a root $\varrho \in p\Z_p$, and the nature of the factorization depends on the multiplicity of $\varrho$ and on its $p$-adic valuation, denoted by $v_p(\varrho)$. As we will show later, the existence of a root in $p\Z_p$ is a necessary condition for quadratic and cubic polynomials. 

If $\varrho\in p\Z_p$ is a multiple root of $f(x)$, then it is reducible with factors in $\Z[x]$.
 
\begin{proposition}\label{prop:doubleroot}
Let $f(x)$ be a polynomial of degree $d\ge 2$. If $f(x)$ has a multiple root $\varrho\in\Z_p$ with $v_p(\varrho)=\ell\ge 1$, then $f(x)$ admits a proper factorization 
\begin{equation*}
f(x)=G(x) \cdot   f_{\textup{red}}(x)
\end{equation*}
where $G(x)=\gcd(f, f^\prime)$ in $\Z[x]$, and $f_{\textup{red}}(x)=f(x)/G(x)$.
 \end{proposition}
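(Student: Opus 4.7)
My plan is to verify three things about the proposed factorization $f = G\cdot f_{\textup{red}}$: that $G$ has positive degree and divides $f$ in $\Z[x]$ (so that $f_{\textup{red}}$ is indeed a polynomial), that both $G$ and $f_{\textup{red}}$ vanish at $\varrho$ when regarded as elements of $\Z_p[x]$, and that this forces each constant term to be divisible by $p$, so that neither factor is a unit in $\Z[[x]]$.

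For the first step, since $\varrho$ is a common root of $f$ and $f'$ in $\Z_p$, the discriminant of $f$, an integer, must vanish. Therefore $\gcd(f,f')$ is non-constant in $\Q[x]$, and hence in $\Z[x]$; as $\Z[x]$ is a UFD, $G$ divides $f$ there, and $f_{\textup{red}}=f/G$ defines a polynomial with integer coefficients. For the second step, let $e\ge 2$ denote the multiplicity of $\varrho$ as a root of $f$ in $\Z_p[x]$. In characteristic zero, $(x-\varrho)$ divides $f'$ with multiplicity exactly $e-1$, so, since $G\mid f'$ in $\Z_p[x]$, the multiplicity of $(x-\varrho)$ in $G$ is at most $e-1$. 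Consequently $f_{\textup{red}} = f/G$ retains at least one factor of $(x-\varrho)$ in $\Z_p[x]$; in particular $f_{\textup{red}}(\varrho)=0$, and of course $G(\varrho)=0$ as well since $G\mid f$.

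For the third step, I would invoke the following elementary observation: whenever $h\in\Z[x]$ satisfies $h(\varrho)=0$ for some $\varrho\in p\Z_p$, the identity $h(0) = h(0)-h(\varrho) = -\sum_{i\ge 1}h_i\varrho^i$ shows $h(0)\in\varrho\Z_p$, so $v_p(h(0))\ge v_p(\varrho) = \ell\ge 1$; since $h(0)\in\Z$, it follows that $p\mid h(0)$ and in particular $h(0)\ne\pm 1$. Applying this to $h=G$ and to $h=f_{\textup{red}}$ shows that neither factor is a unit in $\Z[[x]]$, so the factorization is proper. The most delicate point is guarding against the possibility that $G$ absorbs all $e$ copies of $(x-\varrho)$, since otherwise $f_{\textup{red}}$ might have constant term $\pm 1$ and the factorization would collapse in $\Z[[x]]$; the multiplicity inequality $e-1<e$ is precisely what rules this out.
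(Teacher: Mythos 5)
Your argument is correct and follows essentially the same route as the paper's own proof: set $G=\gcd(f,f')$, observe that both $G$ and $f_{\textup{red}}=f/G$ vanish at $\varrho$ (the paper phrases this via the squarefreeness of $f_{\textup{red}}$, you via the multiplicity count $e$ versus $e-1$), and conclude that $p$ divides both constant terms, so neither factor is a unit in $\Z[[x]]$. One small repair: ``$G(\varrho)=0$ since $G\mid f$'' is not a valid inference on its own (a divisor of $f$ need not vanish where $f$ does); the correct reason, already implicit in your multiplicity count, is that $(x-\varrho)^{e-1}$ divides both $f$ and $f'$ in $\Q_p[x]$ and hence divides their gcd, so $G(\varrho)=0$ because $e-1\ge 1$.
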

\begin{proof}
Let $\varrho\in\Z_p$ with $v_p(\varrho)=\ell$ be the multiple root of $f(x)$. 

Let $G(x)=\gcd(f, f^\prime)\in \Z[x]$. Then $f_{\text{red}}(x)=f(x)/G(x)$ is an element of $\Z[x]$ and $f(x)= G(x) f_{\text{red}}(x)$. This factorization is also valid in $\Q_p[x]$, and every root of $f(x)$ is a simple root of $f_{\text{red}}(x)$. Thus, together with $f_{\text{red}}(\varrho)=0$, we must also have $G(\varrho)=0$. Hence $p^\ell$ divides both $f_{\text{red}}(0)$ and $G(0)$ giving that the factorization $f(x)= G(x) f_{\text{red}}(x)$ is a proper factorization of $f(x)$ in $\Z[[x]]$.
\end{proof}

We now tackle the case when $f(x)$ has a simple root in $p\Z_p$.

\begin{theorem}\label{case:n<=2m}
Let $f(x)$ be a polynomial of the form \eqref{def:polynomial} with $n\le 2m$ and $d\ge 2$. If $f(x)$ has a simple root $\varrho\in\Z_p$ with $v_p(\varrho)=\ell\ge 1$, then $f(x)$ admits a factorization 
\begin{equation}\label{factorization}
 f(x)=\big(p^\ell+a_1 x+a_2 x^2+\cdots\big)\big(p^{n-\ell}+b_1 x+b_2 x^2+\cdots\big) \text{ in } \Z[[x]].
\end{equation}
\end{theorem}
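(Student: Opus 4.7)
The plan is to construct $A(x) = p^\ell + \sum_{k\ge 1} a_k x^k$ and $B(x) = p^{n-\ell} + \sum_{k\ge 1} b_k x^k$ coefficient by coefficient. Comparing coefficients of $x^k$ in $AB = f$ gives, for each $k \ge 1$,
\[
p^{n-\ell}\,a_k + p^\ell\,b_k \;=\; f_k - \sum_{i=1}^{k-1} a_i\, b_{k-i} \;=:\; c_k. \tag{$\ast_k$}
\]
After interchanging $A$ and $B$ if necessary, I may assume $\ell \le n-\ell$. A $p$-adic valuation analysis of $f(\varrho) = 0$ combined with the hypothesis $n \le 2m$ forces $\ell \le m$, so $p^\ell$ divides each of $p^{n-\ell}$, $p^m\gamma_1$ and $p^n$. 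Consequently $(\ast_k)$ admits integer solutions if and only if $p^\ell \mid c_k$, in which case $a_k$ is a free integer parameter and $b_k = c_k/p^\ell - p^{n-2\ell}\,a_k$.

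The core of the proof is the inductive verification that $p^\ell \mid c_k$ for every $k \ge 1$. The case $k=1$ is immediate from $\ell \le m$. For $k \ge 2$, I would substitute $b_{k-1} = c_{k-1}/p^\ell - p^{n-2\ell} a_{k-1}$ into the defining expression for $c_k$ to isolate the dependence on $a_{k-1}$:
\[
c_k \;=\; c_k^{(0)} \;+\; a_{k-1}\bigl(a_1 p^{n-2\ell} - b_1\bigr),
\]
where $c_k^{(0)}$ depends only on $a_1,\dots,a_{k-2}$. Writing $\varrho = p^\ell u$ with $u \in \Z_p^\times$, the simple root $\varrho$ yields a polynomial factorization $f(x) = (p^\ell - u^{-1} x)\,B^*(x)$ in $\Z_p[x]$ with $B^*(0) = p^{n-\ell}$; this $\Z_p$-shadow prescribes the level-one constraint $a_1 \equiv -u^{-1}\pmod{p}$, which I would impose.

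I would then split the remaining work into two regimes according to the residue of $\kappa := a_1 p^{n-2\ell} - b_1$ modulo $p$. If $n = 2\ell$ or $m = \ell$, then $\kappa$ is a unit modulo $p$, so the affine map $a_{k-1} \mapsto c_k \pmod{p^\ell}$ is a bijection onto $\Z/p^\ell\Z$ and $a_{k-1}$ can be adjusted modulo $p^\ell$ to enforce $p^\ell \mid c_k$. If instead $n > 2\ell$ and $m > \ell$, then $\kappa \equiv 0 \pmod p$ and $c_k \pmod{p^\ell}$ is insensitive to the choice of $a_{k-1}$; in this degenerate regime, $p^\ell \mid c_k$ must be established \emph{automatically}, as a consequence of the forced $p$-adic divisibilities of $\gamma_2,\dots,\gamma_d$ coming from $f(\varrho) = 0$ in $\Z_p$.

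The main obstacle is this second (degenerate) regime, where automatic divisibility must be proved without recourse to a surjective perturbation argument. I would handle it by propagating valuation data from the $\Z_p$-shadow factorization through the product terms $a_i b_{k-i}$ of the recursion, matching the integer computation with its $\Z_p$ counterpart at each level. Once this bookkeeping is in hand, the recursion yields integer coefficients $a_k, b_k$ satisfying $(\ast_k)$ for every $k \ge 1$, producing the factorization~\eqref{factorization}.
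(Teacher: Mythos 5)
Your setup---the recursion $(\ast_k)$, the solvability criterion $p^\ell\mid c_k$ (valid since $\ell\le n-\ell$), and the idea of using the freedom in $a_{k-1}$ to force solvability at level $k$---matches the paper's framework. But the proof has a genuine gap, located exactly where you acknowledge "the main obstacle" to be. Your pivot $\kappa=a_1p^{n-2\ell}-b_1=2p^{n-2\ell}a_1-p^{m-\ell}\gamma_1$ is (up to sign) the derivative at $a_1$ of the quadratic truncation $g_2(x)=\gamma_2-p^{m-\ell}\gamma_1x+p^{n-2\ell}x^2$ of the reversed polynomial, and the simple-root hypothesis gives no control whatsoever over $v_p(\kappa)$ when $d\ge 3$: what it controls is $\theta=v_p\bigl(g_d'(r)\bigr)<\infty$ for the full reversal $g_d(x)=\frac{(-1)^dx^d}{p^{2\ell}}f(-p^\ell/x)$ at its root $r=-p^\ell/\varrho$. (Your dichotomy is also wrong on its own terms: e.g.\ $p=2$, $n=2\ell$, $m>\ell$ gives $\kappa\equiv 2a_1\equiv 0\pmod 2$.) So in general you are in the "degenerate regime," and there the divisibility $p^\ell\mid c_k$ is emphatically \emph{not} automatic for arbitrary admissible choices of the earlier $a_j$; it holds only for very particular choices, and identifying them is the content of the theorem.

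Concretely, the paper must (i) take $a_1$ to be a lift of $r$ to precision $p^{(2d-3)\ell+2\theta}$---your congruence $a_1\equiv -u^{-1}\pmod p$ is far too coarse to make $g_d(a_1)$, hence $s_d$, sufficiently divisible; (ii) set $a_2=\dots=a_{d-1}=0$; and (iii) telescope blocks of $d-1$ consecutive equations using the identity $g_{k+1}'=-(g_k+xg_k')$ so that the effective pivot becomes $g_d'(a_1)$, of exact valuation $\theta$, after which an induction tracks the divisibilities $p^{(d-2)\ell+2\theta}\mid t_{d+j}$, $p^{(d-2)\ell+\theta}\mid a_{d+j}$, $p^{\theta}\mid s_{d+1+j}$. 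None of these ideas appears in your outline; "propagating valuation data from the $\Z_p$-shadow \dots\ once this bookkeeping is in hand" is a restatement of the problem, not a solution, since the coefficients of the shadow factor $B^*$ are genuinely non-integral and the whole difficulty is absorbing the truncation errors. For $d=2$ your scheme essentially recovers the known quadratic case (there $g_2=g_d$, so $v_p(\kappa)=\theta$ is at least finite), but for $d\ge 3$ the induction as you describe it does not close.
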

\begin{proof}
Let $\varrho\in\Z_p$, with $v_p(\varrho)=\ell$, be the root of $f(x)$. Since $n\le 2m$ and
\begin{equation*}
 p^n + p^m\gamma_1 \varrho + \gamma_2 \varrho^2 +\dotsb +\gamma_d \varrho^d = 0,
\end{equation*}
we must have $\ell\le m$ and $2\ell\le n$.

The goal is to find coefficients $a_k, b_k\in\Z$ such that $f(x)$ factors as in \eqref{factorization}.  

First, consider the polynomial given by $g(-x)=\frac{x^d}{p^{2\ell}}f(p^\ell/x)$. Thus
\begin{align*}
  g(x) =p^{(d-2)\ell}\gamma_d &- p^{(d-3)\ell}\gamma_{d-1}x + p^{(d-4)\ell}\gamma_{d-2}x^2+\cdots \\
  &\cdots+ (-1)^{d-2} \gamma_2 x^{d-2} + (-1)^{d-1} p^{m-\ell}\gamma_1 x^{d-1} + (-1)^{d}  p^{n-2\ell} x^d.
\end{align*}
We set $g_d(x)=g(x)$ and define recursively
\begin{equation*}
  g_k(x)=\frac{p^{(k-1)\ell}\gamma_{k+1}-g_{k+1}(x)}{x} \;\text{ for } k=(d-1), (d-2), \dots, 2.
\end{equation*}
In particular, $g_2(x)=\gamma_2 - p^{m-\ell}\gamma_1 x + p^{n-2\ell} x^2$. Now, if we solve for $g_{k+1}(x)$ and take its derivative, we obtain \begin{equation}\label{DerivativeRelation}
  g_{k+1}'(x) = -\big(g_k(x)+xg_k'(x)\big) \text{ for every } 2\le k \le d-1.
\end{equation}

Note that if $r=-p^{\ell}/\varrho \in\Z_p$, then $v_p(r)=0$ and $g_d(r)=0$. Moreover, since $f'(\varrho)\not=0$, we have $g_d'(r)\not=0$. Let $\theta=v_p(g_d'(r))$. 

Our factoring starts by letting $a_1\in\Z$ be a reduction of $r$ mod $p^{(2d-3)\ell+2\theta}$. Then
\begin{equation*}
g_d(a_1) \equiv 0\!\!\! \pmod{p^{(2d-3)\ell+2\theta}} \;\text{ and }\; v_p(g_d'(a_1)) =\theta. 
\end{equation*}
Since $g_d(a_1)= p^{(d-2)\ell}\gamma_{d} - a_1g_{d-1}(a_1)$, we have that $p^{(d-2)\ell}\mid g_{d-1}(a_1)$. If we proceed iteratively, we obtain that
\begin{equation*}
  p^{(k-1)\ell}\mid g_{k}(a_1) \text{ for every } 2\le k \le d-1.
\end{equation*}

We set $a_2=a_3=\dots =a_{d-1}=0$, and define
\begin{equation*}
s_j=b_j+ p^{n-2\ell}a_j \text{ for every } j\in\N.
\end{equation*}
With these conventions, finding the factorization \eqref{factorization} is equivalent to solving in $\Z$ the system of equations:
\begin{equation}\label{GeneralSystem}
\begin{split}
p^m \gamma_1 &= p^\ell s_1\\ 
\gamma_2 &= p^\ell s_2 + a_1(s_1-p^{n-2\ell}a_1)\\
\gamma_3& = p^\ell s_3 + a_1s_2 \\[-1ex] 
& \;\; \vdots \\[-1ex]
\gamma_d &= p^\ell s_d + a_1s_{d-1} \\
0 &= p^\ell s_{d+1} +a_{d}b_1+ a_1b_d \\
0 &= p^\ell s_{d+2} +a_{d+1}b_1+ a_d s_2 + a_1b_{d+1}\\[-1ex]
& \;\; \vdots 
\end{split}
\end{equation}
The first equation then gives $s_1=p^{m-\ell}\gamma_1$, so the second equation becomes
\[ \gamma_2 = p^\ell s_2 + a_1(p^{m-\ell}\gamma_1-p^{n-2\ell}a_1). \]
Now, if we set $s_2 = p^{-\ell} g_2(a_1)$, this equation is also satisfied. In fact, if we set
\begin{equation}\label{eq:s_kg_k}
 s_k = p^{-(k-1)\ell} g_{k}(a_1) \;\text{ for } 2\le k\le d,
\end{equation}
the integers $a_1$, $s_1, s_2,\dots,s_d$ solve the first $d$ equations of the above system. At this point, we also have 
$b_1=p^{m-\ell}\gamma_1-p^{n-2\ell}a_1$ and $b_k=s_k$ for $k=2,\dots,d-1$. 

For $k\ge d$, we combine $a_kb_1$ with $a_1 b_k$, and use $g_2'(a_1)=-(p^{m-\ell}\gamma_1 - 2p^{n-2\ell}a_1)$ to rewrite each corresponding equation in \eqref{GeneralSystem} as
\begin{equation*}
 0 = p^\ell s_{k+1} - a_k g_2'(a_1) +a_1 s_k + \sum_{j=d}^{k-1} a_j b_{k+1-j}.
\end{equation*}

We proceed with an inductive algorithm to choose $a_{d+j}$ and $s_{d+1+j}$ for $j\ge 0$. This will then determine $b_{d+j}$, providing the desired factorization of $f(x)$. 

\subsection*{Base step} Choosing $a_d$ and $s_{d+1}$.
We consider the next block of $d-1$ consecutive equations in \eqref{GeneralSystem}. As mentioned above, these equations can be written as\footnote{Recall that $b_k=s_k$ for $k=2,\dots,d-1$}
\begin{equation}\label{eq:basecase}
\begin{split}
0 &= p^\ell s_{d+1} - a_{d}g_2^\prime(a_1)+a_1s_d\\
0 &= p^\ell s_{d+2} - a_{d+1}g_2^\prime(a_1)+a_ds_2+a_1s_{d+1}\\
0 &= p^\ell s_{d+3} - a_{d+2}g_2^\prime(a_1)+a_{d+1}s_2+a_ds_3+a_1s_{d+2}\\[-1ex]
   & \;\; \vdots \\[-1ex]
0 &= p^\ell s_{2d-1} - a_{2d-2}g_2^\prime(a_1)+a_{2d-3}s_2+\dots + a_{d}s_{d-1}+a_1s_{2d-2}.
\end{split}
\end{equation}

The first two equations combined give
\begin{align} \notag
0 &= p^{2\ell} s_{d+2} -p^\ell a_{d+1}g_2^\prime(a_1)+p^\ell a_d s_2+a_1(a_{d}g_2^\prime(a_1)-a_1s_d)\\ \notag
&=p^{2\ell} s_{d+2} -p^\ell a_{d+1}g_2^\prime(a_1)+a_{d}\bigr(g_2(a_1)+a_1g_2^\prime(a_1)\bigl)-a_1^2s_d\\ \label{eq:dplus2}
&=p^{2\ell} s_{d+2} -p^\ell a_{d+1}g_2^\prime(a_1)-a_{d}g_3^\prime(a_1)-a_1^2s_d
\end{align}
because $p^\ell s_2=g_2(a_1)$ and $g_2(a_1)+a_1g_2^\prime(a_1)=-g_3^\prime(a_1)$. If we now combine \eqref{eq:dplus2} with the third equation in \eqref{eq:basecase} and use, in addition, the identities $p^{2\ell} s_3=g_3(a_1)$ and $g_3(a_1)+a_1g_3^\prime(a_1)=-g_4^\prime(a_1)$, we then arrive at
\begin{equation*}
0 = p^{3\ell} s_{d+3} -p^{2\ell} a_{d+2}g_2^\prime(a_1)-a_{d+1}p^\ell g_3^\prime(a_1)-a_dg_4^\prime(a_1)+a_1^3 s_d.
\end{equation*}

Continuing this process, using the relations \eqref{DerivativeRelation} and \eqref{eq:s_kg_k} in each iteration, the system \eqref{eq:basecase} becomes
\begin{equation} \label{eq:BaseBlock}
\begin{split} 
0 &= p^\ell s_{d+1} - a_{d}g_2^\prime(a_1)+a_1s_d\\ 
0 &= p^{2\ell} s_{d+2} -p^\ell a_{d+1}g_2^\prime(a_1)-a_{d}g_3^\prime(a_1)-a_1^2s_d\\ 
0 &= p^{3\ell} s_{d+3} -p^{2\ell} a_{d+2}g_2^\prime(a_1)-a_{d+1}p^\ell g_3^\prime(a_1)-a_dg_4^\prime(a_1)+a_1^3 s_d\\[-1ex]
   & \;\; \vdots \\[-1ex] 
0 &=p^{(d-1)\ell}s_{2d-1}-p^{(d-2)\ell}a_{2d-2}g_2^\prime(a_1)-\dots -a_d g_d^\prime(a_1)+(-1)^{d}a_1^{d-1}s_d.
\end{split}
\end{equation}

We now set 
\begin{equation}\label{eq:tdDef}
 t_d=p^{(d-2)\ell}s_{2d-1}-p^{(d-3)\ell}a_{2d-2}g_2^\prime(a_1)-\dots -a_{d+1} g_{d-1}^\prime(a_1)
\end{equation}
and rewrite the last equation in \eqref{eq:BaseBlock} as
\begin{align} 
\label{eq:td-basic}
0 &=p^{\ell}t_d - a_d g_d^\prime(a_1)+(-1)^{d}a_1^{d-1}s_d \\
\label{eq:td-theta}
0 &=p^{\ell}t_{d}-p^\theta \big[a_d p^{-\theta}g_d^\prime(a_1)+(-1)^{d-1}a_1^{d-1}p^{-\theta}s_d\big].
\end{align}
Since $p^{(2d-3)\ell+2\theta}\mid g_d(a_1)$ and since $s_d=p^{-(d-1)\ell}g_d(a_1)$, we have that $p^{(d-2)\ell+2\theta}\mid s_d$. In particular, $p^{-\theta}s_d\in\Z$. Now, since $v_p(p^{-\theta}g_d^\prime(a_1))=0$, we can choose $a_d\in\Z$ such that the expression inside the brackets in \eqref{eq:td-theta} is divisible by $p^{(d-1)\ell+\theta}$. We then solve for $t_d$ and obtain that $t_d$ is divisible by $p^{(d-2)\ell+2\theta}$. Furthermore, from equation \eqref{eq:td-basic} we deduce that $a_d$ must be divisible by $p^{(d-2)\ell+\theta}$.

With $a_d$ having been chosen, we go back to the first equation of \eqref{eq:BaseBlock} and solve for $s_{d+1}$. 
It follows that, at the very least, $p^\theta\mid s_{d+1}$. Note that if $d=2$, then $s_{d+1}=s_{2d-1}=t_d$, so the choice of $t_d$ in equation \eqref{eq:td-theta} already determines $s_{d+1}$.

\subsection*{Inductive step} Choosing $a_{d+j}$, $t_{d+j}$ and $s_{d+1+j}$.

Consistent with the above definition of $t_d$, we now let
\begin{equation*}
 t_{d+j}=p^{(d-2)\ell}s_{2d-1+j}-p^{(d-3)\ell}a_{2d-2+j}g_2^\prime(a_1)-\cdots -a_{d+1+j} g_{d-1}^\prime(a_1).
\end{equation*}

Following the pattern in \eqref{eq:basecase}, the equation for $s_{2d}$ can then be written as
\begin{equation*}
 0=p^\ell s_{2d} - a_{2d-1}g_2^\prime(a_1)+a_{2d-2}s_2+\dots + a_{d+1}s_{d-1}+a_d b_d+a_1s_{2d-1}.
\end{equation*}
If we multiply both sides of this equation by $p^{(d-2)\ell}$ and use the relations \eqref{DerivativeRelation}, \eqref{eq:s_kg_k}, and \eqref{eq:tdDef}, we obtain the equivalent equation
\begin{equation*}
 0=p^\ell t_{d+1} - a_{d+1} g_d^\prime(a_1)+a_1t_d+ p^{(d-2)\ell}a_db_d.
\end{equation*}

Repeating this process iteratively, for every $j\ge 1$, the equation for $s_{2d+j-1}$ in the system \eqref{GeneralSystem} can be reduced to
\begin{equation}\label{eq:j_equation}
0 =p^{\ell}t_{d+j} - a_{d+j} g_d^\prime(a_1)+a_1t_{d+j-1}+ p^{(d-2)\ell} R_{d+j-1},
\end{equation}
where $R_{d+j-1}=a_{d}b_{d+j-1}+\dots+a_{d+j-1}b_{d}$.

Assume that the numbers $a_{d+i}$,  $t_{d+i}$ and $s_{d+1+i}$ have been chosen for all $i$ with $0\le i\le j-1$, in such a way that 
the corresponding equations are all satisfied, and such that
\begin{equation*}
 p^{(d-2)\ell+2\theta} \mid t_{d+i},\quad  p^{(d-2)\ell+\theta} \mid a_{d+i}, \quad p^{\theta} \mid s_{d+1+i}.
\end{equation*}
Note that we then have $b_{d+i}$ as well and, in particular, $p^{\theta} \mid b_{d+i}$ for all $0\le i\le j-1$. Thus each $R_{d+i}$ is determined and $p^{2\theta} \mid R_{d+i}$.

We now proceed to choose $a_{d+j}$ and $t_{d+j}$. Since $p^\theta$ divides $g_d^\prime(a_1)$, $t_{d+j-1}$, and $R_{d+j-1}$, we can rewrite \eqref{eq:j_equation} as
\begin{equation*}
0 =p^{\ell}t_{d+j} - p^\theta\big[a_{d+j} p^{-\theta}g_d^\prime(a_1)-a_1p^{-\theta}t_{d+j-1}- p^{(d-2)\ell-\theta} R_{d+j-1}\big]
\end{equation*}
without leaving $\Z$. Now, as for the base step, we can choose $a_{d+j}\in\Z$ such that the bracket is divisible by $p^{(d-1)\ell+\theta}$ (since $v_p(p^{-\theta}g_d^\prime(a_1))=0$). We then solve for $t_{d+j}$ and get a number which is divisible by $p^{(d-2)\ell+2\theta}$. From equation \eqref{eq:j_equation}, it follows that $a_{d+j}$ must be divisible by $p^{(d-2)\ell+\theta}$. Once again, if $d=2$ we can stop here since $s_{d+1+j}=s_{2d-1+j}=t_{d+j}$.

To choose $s_{d+1+j}$ we consider two cases. If $j\ge d-2$, the definition of $t_{j+2}$ gives
\begin{equation*}
 p^{(d-2)\ell}s_{d+1+j} = t_{j+2} + p^{(d-3)\ell}a_{d+j}g_2^\prime(a_1) + \cdots + a_{j+3} g_{d-1}^\prime(a_1).
\end{equation*}
Since the numbers $a_{j+3},\dots,a_{d+j}$ are all divisible by $p^{(d-2)\ell+\theta}$, we can solve for $s_{d+1+j}$ in $\Z$, and get that it is divisible by $p^\theta$. 

If $j<d-2$, then $d+1+j < 2d-1$ and we can use the equations in \eqref{eq:basecase} to solve for  $s_{d+1+j}$. More precisely, we consider the equation
\begin{equation*}
0 = p^\ell s_{d+j+1} - a_{d+j}g_2'(a_1)+a_{d+j-1}b_2+\dots + a_{d}b_{j+1}+a_1s_{d+j}.
\end{equation*}
Again, the numbers $a_d,\dots,a_{d+j}$ are all divisible by $p^{(d-2)\ell+\theta}$, in particular they are divisible by $p^{\ell+\theta}$. On the other hand, since $p^{(d-2)\ell+2\theta}$ divides $s_d$, an iteration of the equations in \eqref{eq:basecase} gives that each $s_{d+j}$ is divisible by $p^{(d-2-j)\ell+\theta}$ for every $1\le j \le d-2$, and therefore $p^{\ell+\theta}\mid s_{d+j}$ for $j<d-2$. That means, also in this case, we can solve for $s_{d+1+j}$ in $\Z$ and obtain $p^\theta \mid s_{d+1+j}$.

Altogether, we have found $a_{d+j}$, $t_{d+j}$, $s_{d+1+j}\in\Z$ satisfying the equations under consideration, and such that  
\begin{equation*}
 p^{(d-2)\ell+2\theta} \mid t_{d+j},\quad  p^{(d-2)\ell+\theta} \mid a_{d+j}, \quad p^{\theta} \mid s_{d+1+j}.
\end{equation*}
This completes the induction and proves the assertion of the theorem. 
\end{proof}

\medskip
In general, having a root in $\Z_p$ is certainly not necessary for a polynomial to factor in $\Z[[x]]$. For instance, 
\begin{equation*}
 f(x)=49+98x+63x^2+14x^3+x^4
\end{equation*}
is reducible in $\Z[[x]]$, namely $f(x)=(7+7x+x^2)^2$, but it has no roots in $\Z_7$.

However, as we discuss below, if the polynomial
\begin{equation*}
f(x)=p^n + p^m\gamma_1 x + \gamma_2 x^2 +\cdots+ \gamma_dx^d, \quad \gamma_d\not=0,
\end{equation*}
is as in Theorem~\ref{case:n<=2m}, with the additional condition that $\gcd(p,\gamma_2,\gamma_3)=1$, then its reducibility in $\Z[[x]]$ gives the existence of a root in $p\Z_p$. Note that if $n\le 2m$ and $f(x)$ is reducible in $\Z[[x]]$, then there exist $\ell$, $a_k$, $b_k\in \Z$, $1\le\ell\le n/2$, such that
\begin{equation}\label{eq:factors}
f(x)= \big(p^\ell+a_1x+a_2x^2+\cdots\big)\big(p^{n-\ell}+b_1x+b_2x^2+\cdots\big).
\end{equation}

\begin{proposition}\label{rootg(x)}
Let $f(x)$ be a polynomial of the form \eqref{def:polynomial} with $n\le 2m$, and such that $\gcd(p,\gamma_2,\gamma_3)=1$. If $f(x)$ factors as above, then $f(x)$ has a root $\varrho\in \Z_p$ with $v_p(\varrho)=\ell$.
\end{proposition}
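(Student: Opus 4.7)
The plan is to extract from the coefficient equations of $f = AB$ enough information to write one of the two factors in the form $p^\ell + c_1 x + c_2 x^2 + \cdots$ with $p \ndiv c_1$, and then to locate a root of that factor in $\Z_p$ of valuation $\ell$ by a Hensel-style lifting.

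First, since $\ell \ge 1$ and $n-\ell \ge \ell \ge 1$, reducing the equations for the $x^2$- and $x^3$-coefficients of $AB$ modulo $p$ gives
\begin{equation*}
\gamma_2 \equiv a_1 b_1 \pmod{p} \quad \text{and} \quad \gamma_3 \equiv a_1 b_2 + a_2 b_1 \pmod{p}.
\end{equation*}
The hypothesis $\gcd(p, \gamma_2, \gamma_3) = 1$ rules out the possibility that $p$ divides both $a_1$ and $b_1$. In the remaining bad case $p \mid a_1$ and $p \ndiv b_1$, I would reduce the identity $p^{m-\ell}\gamma_1 = b_1 + a_1 p^{n-2\ell}$ (valid in $\Z$ because $m \ge \ell$ and $n \ge 2\ell$) modulo $p$: its right-hand side is $\not\equiv 0$, which forces $m = \ell$, and then $n \le 2m$ together with $n \ge 2\ell$ yields $n = 2\ell$. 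Since now $n - \ell = \ell$, the factor $B$ has the same shape as $A$, so after interchanging $A$ and $B$ I may assume throughout that $p \ndiv a_1$.

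Second, I substitute $x = p^\ell u$ into $A(x)$ to obtain
\begin{equation*}
A(p^\ell u) = p^\ell \bigl(1 + a_1 u + a_2 p^\ell u^2 + a_3 p^{2\ell} u^3 + \cdots\bigr) = p^\ell F(u).
\end{equation*}
The coefficient of $u^j$ in $F$ has $p$-adic valuation at least $(j-1)\ell$, so $F$ defines a continuous function on $\Z_p$; moreover $F(-a_1^{-1}) \equiv 0 \pmod{p}$ and $F'(u) \equiv a_1 \not\equiv 0 \pmod{p}$. A standard Hensel-type lifting---applied either directly to $F$ by Newton iteration, or to polynomial truncations $F^{(K)}(u) := 1 + a_1 u + \cdots + a_K p^{(K-1)\ell} u^K$ followed by an invocation of Lemma~\ref{rootinZp}---produces $u \in \Z_p$ with $u \equiv -a_1^{-1} \pmod{p}$ (in particular a unit) and $F(u) = 0$. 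Then $\varrho = p^\ell u$ satisfies $v_p(\varrho) = \ell$ and $A(\varrho) = 0$, hence $f(\varrho) = 0$.

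The main obstacle I anticipate is that $A$ is a genuine power series rather than a polynomial, so it cannot simply be evaluated at arbitrary points of $\Z_p$. The substitution $x = p^\ell u$ is precisely what tames this: every tail of $F$ beyond degree $K$ carries an extra factor of $p^{K\ell}$, and the root-finding problem reduces to Hensel-lifting a simple root of a polynomial truncation, which is routine.
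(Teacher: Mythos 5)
Your argument is correct, but it takes a genuinely different route from the paper's. The paper first invokes Lemma~\ref{lemma:choosing} to renormalize the factorization so that $a_2=\cdots=a_k=0$ for each fixed $k$, then reads off from the coefficient equations that the fixed polynomial $g_d(x)=(-1)^d x^d f(-p^\ell/x)/p^{2\ell}$ satisfies $g_d(a_{1,k})\equiv 0\pmod{p^{k\ell}}$, and finally applies Lemma~\ref{rootinZp} to produce a unit root $r$ of $g_d$, whence $\varrho=-p^\ell/r$. You instead keep the factorization as given, show (exactly as in the opening step of Lemma~\ref{lemma:choosing}) that one may assume $p\ndiv a_1$, and then locate a root of the power-series factor $A$ itself by rescaling $x=p^\ell u$ and Hensel-lifting the simple root mod $p$ of $F(u)=p^{-\ell}A(p^\ell u)$. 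Your version is shorter and makes the valuation statement transparent, but it leans on two standard facts that the paper's purely polynomial argument avoids: Hensel's lemma for restricted power series (or, if you go via the truncations $F^{(K)}$, a compactness/continuity step to pass from their roots to a root of $F$ --- note that Lemma~\ref{rootinZp} as stated concerns a single polynomial, not a family of truncations varying with $K$), and the compatibility of the formal identity $f=AB$ with evaluation on $p\Z_p$, which is what justifies the final step $f(\varrho)=A(\varrho)B(\varrho)=0$. Two small points to tidy: when $\gamma_1=0$ the identity $p^{m-\ell}\gamma_1=b_1+p^{n-2\ell}a_1$ has zero left-hand side, so the case $p\mid a_1$, $p\ndiv b_1$ is simply impossible there rather than forcing $m=\ell$; and it is worth noting explicitly that after the swap in the bad case one still has constant term $p^\ell$ (since $n=2\ell$), so the root produced does have valuation $\ell$ as the proposition requires.
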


In order to prove this proposition, we need the following lemma.

\begin{lemma}\label{lemma:choosing}
If $f(x)$ as above is reducible in $\Z[[x]]$, then the factorization \eqref{eq:factors} can be arranged such that $\gcd(p,a_1)=1$ and $a_2=\cdots=a_k=0$ for any $k\ge 2$.
\end{lemma}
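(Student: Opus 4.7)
The plan is to establish the lemma in two stages. First, I will show that $\gcd(p, a_1) = 1$ can always be arranged by exploiting the hypothesis $\gcd(p, \gamma_2, \gamma_3) = 1$ and swapping the roles of $A$ and $B$ if necessary. Second, starting from such a factorization, I will inductively multiply $A$ by a well-chosen polynomial unit $u(x) \in \Z[[x]]^{\times}$ (and $B$ by $1/u$) so as to kill the coefficients $a_2, \dotsc, a_k$ one at a time.

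For the first stage, comparing the coefficients of $x^2$ and $x^3$ in $f = AB$ modulo $p$, and using $\ell, n-\ell \ge 1$, yields $\gamma_2 \equiv a_1 b_1 \pmod p$ and $\gamma_3 \equiv a_1 b_2 + a_2 b_1 \pmod p$. If both $p \mid a_1$ and $p \mid b_1$, then $p$ would divide both $\gamma_2$ and $\gamma_3$, contradicting the hypothesis. Hence at least one of $a_1, b_1$ is coprime to $p$, and after relabeling we may assume $\gcd(p, a_1) = 1$.

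For the second stage, I will induct on $k$, with $k = 1$ covered by the first stage. Assuming a factorization with $\gcd(p, a_1) = 1$ and $a_2 = \dotsb = a_{k-1} = 0$, I seek a polynomial unit $u(x) = 1 + u_1 x + \dotsb + u_k x^k$ so that $(Au)_j = 0$ for $2 \le j \le k$. Using the vanishing of $a_2, \dotsc, a_{k-1}$, these collapse to the cascade $p^\ell u_j + a_1 u_{j-1} = 0$ for $j = 2, \dotsc, k-1$ together with the final equation $p^\ell u_k + a_1 u_{k-1} + a_k = 0$. Replacing $(A, B)$ by $(Au, B/u)$ preserves the factorization, and since the new first coefficient $(Au)_1 = a_1 + p^\ell u_1 \equiv a_1 \pmod{p}$, coprimality to $p$ is preserved as well.

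The main technical obstacle is producing an integer solution to this cascade. Iterating the recurrence gives $u_j = (-a_1)^{j-1} u_1 / p^{(j-1)\ell}$, which in view of $\gcd(a_1, p) = 1$ forces $p^{(k-2)\ell} \mid u_1$; I will therefore set $u_1 = p^{(k-2)\ell} U$ with $U \in \Z$ to be determined. The final equation then reduces to $p^\ell u_k = -a_k + (-a_1)^{k-1} U$, so it suffices to choose $U$ satisfying $(-a_1)^{k-1} U \equiv a_k \pmod{p^\ell}$, which is possible because $a_1$ is a unit modulo $p^\ell$. This completes the induction.
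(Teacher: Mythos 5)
Your argument is essentially the paper's: the same unit-multiplication induction, with the cascade $p^\ell u_j + a_1 u_{j-1}=0$ forcing $p^{(k-2)\ell}\mid u_1$, and the last coefficient killed by solving a congruence modulo $p^\ell$ using that $a_1$ is a unit mod $p$. The computations in your second stage check out, including the preservation of $\gcd(p,(Au)_1)=1$ and of the previously cleared coefficients.

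The one point you gloss over is the relabeling in the first stage. The factorization \eqref{eq:factors} carries the normalization $1\le\ell\le n/2$, and this is not cosmetic: Proposition~\ref{rootg(x)} needs $2\ell\le n$ (and hence $\ell\le m$) so that the auxiliary polynomial $g_d(x)$, whose coefficients involve $p^{m-\ell}$ and $p^{n-2\ell}$, lies in $\Z[x]$. Swapping the two factors replaces $\ell$ by $n-\ell$, which breaks the normalization whenever $2\ell<n$. The paper closes this by observing that if $2\ell<n$ then $\ell<m$, so the linear-coefficient equation $b_1=p^{m-\ell}\gamma_1-p^{n-2\ell}a_1$ forces $p\mid b_1$, and therefore $\gcd(p,a_1)=1$ holds with no swap; the swap is only ever invoked in the symmetric case $2\ell=n$, where it is harmless. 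As written, your proof could in principle hand the next proposition a factorization with $\ell>n/2$. This is a small and easily repaired omission; everything else matches the paper's proof.
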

\begin{proof}
From equation \eqref{eq:factors} we deduce the following equalities:
\begin{align*}
p^m\gamma_1 &=p^\ell(b_1+p^{n-2\ell}a_1), \\
\gamma_2 &=p^\ell(b_2+p^{n-2\ell}a_2)+a_1b_1,\\
\gamma_3 &=p^\ell(b_3+p^{n-2\ell}a_3)+a_1b_2+a_2b_1,
\end{align*}
which imply that either $\gcd(p,a_1)=1$ or $\gcd(p,b_1)=1$ (since $\gcd(p,\gamma_2,\gamma_3)=1$).

If $2\ell<n$, then $n\le 2m$ implies $\ell<m$, and so $p \mid b_1$. Hence $\gcd(p,a_1)=1$. 

If $2\ell=n$, then the two factors in \eqref{eq:factors} are exchangeable and we can assume that $a_1$ is the one relatively prime to $p$. 

We will proceed by induction in $k\ge 2$, assuming that $\gcd(p,a_1)=1$.  

Base step ($k=2$): Since $\gcd(p,a_1)=1$, there exist $u_1,u_2\in\Z$ such that
\begin{equation*}
 u_1 a_1  + u_2 p^\ell=-a_2.
\end{equation*}
If we write $f(x)=A(x)B(x)$ with $A(x),B(x)\in\Z[[x]]$ as in \eqref{eq:factors}, then
\begin{equation*}
f(x)= \big((1+u_1x+u_2x^2)A(x)\big)\big((1+u_1x+u_2x^2)^{-1}B(x)\big).
\end{equation*}
Since $(1+u_1x+u_2x^2)^{-1}B(x)\in\Z[[x]]$ and
\begin{align*}
 (1+u_1x+u_2x^2)A(x)&=(1+u_1x+u_2x^2)\big(p^\ell+a_1x+a_2x^2+\cdots\big) \\
 &= p^\ell+(a_1+u_1p^\ell)x+(a_2+u_1 a_1+u_2 p^\ell)x^2+\cdots \\
 &= p^\ell+c_1x+c_3x^3+\cdots,
\end{align*}
we now have a factorization of $f(x)$ with the desired properties. 

Inductive step: Suppose that \eqref{eq:factors} can be written as $f(x)=A_k(x) B(x)$ with
\begin{align*}
A_k(x) &= p^\ell+a_1x+a_{k+1}x^{k+1}+a_{k+2}x^{k+2}+\cdots, \\
B(x) &= p^{n-\ell}+b_1x+b_2x^2+\cdots.
\end{align*}
Since $\gcd(p,a_1^k)=1$, there exist $u_1, u_{k+1}\in\Z$ such that
\begin{equation}\label{k+1coeff}
 (-1)^{k-1}u_1 a_1^k + u_{k+1} p^{k\ell}=-a_{k+1}p^{(k-1)\ell}.
\end{equation}
In particular, $p^{(k-1)\ell}\mid u_1$.  If we let $u_2=-p^{-\ell}u_1a_1$, then
\begin{equation*}
 u_1 a_1 + u_{2} p^{\ell}= 0 \;\text{ and }\; p^{(k-2)\ell}\mid u_2.
\end{equation*}
For $j=3,\dots,k$, we recursively define $u_j=-p^{-\ell}u_{j-1}a_1$ and obtain
\begin{equation*}
 u_{j-1} a_1 + u_{j} p^{\ell}= 0 \;\text{ with }\; p^{(k-j)\ell}\mid u_j.
\end{equation*}
Finally, we let $U(x)=1+u_1x+u_2x^2+\cdots+u_{k+1}x^{k+1}$ and write
\begin{equation*}
f(x)= \big(U(x) A_k(x)\big)\big(U(x)^{-1}B(x)\big).
\end{equation*}
Observe that $U(x)^{-1}B(x)\in\Z[[x]]$. Moreover,
\begin{align*}
 U(x)A_k(x)&=(1+u_1x+\cdots+u_{k+1}x^{k+1})\big(p^\ell+a_1x+a_{k+1} x^{k+1}+\cdots\big) \\
 &= p^\ell+(a_1+u_1p^\ell)x+(u_1 a_1+u_2 p^\ell)x^2+(u_2 a_1+u_3 p^\ell)x^3+ \\
 & \qquad\; +(u_{k-1} a_1+u_k p^\ell)x^k + (a_{k+1}+u_k a_1 + u_{k+1} p^{\ell})x^{k+1} + \cdots 
\end{align*}
By construction, the coefficients of $x^2,\dots,x^k$ are all zero, and 
\begin{equation*}
 u_k a_1=(-1)^{k-1}p^{-(k-1)\ell}u_1a_1^{k}.
\end{equation*}
Thus, by \eqref{k+1coeff}, the coefficient of $x^{k+1}$ is also zero and we get
\begin{equation*}
 U(x)A_k(x) = p^\ell+c_1x+c_{k+2}x^{k+2}+\cdots
\end{equation*}
as desired.
\end{proof}

\begin{proof}[Proof of Proposition~\ref{rootg(x)}]
Let $k\in\N$ with $k\ge d$. By Lemma~\ref{lemma:choosing} we can assume that $f(x)$ factors as in \eqref{eq:factors} with $\gcd(p,a_1)=1$ and $a_2=\dots = a_d=\dots=a_{k}=0$. Note that the choice of $a_1$ depends on $k$. Using the notation $s_j=b_j+p^{n-2\ell}a_j$, we then get the equations
\begin{align*}
p^m\gamma_1 &=p^\ell s_1, \\
\gamma_2 &=p^\ell s_2+a_1(p^{m-\ell}\gamma_1-p^{n-2\ell}a_1), \\
\gamma_j &=p^\ell s_{j}+a_1s_{j-1} \text{ for } 2\le j\le d.
\end{align*}
Solving for $s_d$ in terms of $a_1$ and the coefficients of $f(x)$, these equations give the identity $g_d(a_1)=p^{(d-1)\ell}s_d$, where $g_d(x)$ is the polynomial 
\begin{equation*}
 \qquad g_d(x)=\frac{(-1)^d x^d}{p^{2\ell}}f(-p^\ell/x). \quad \text{(cf.  proof of Theorem~\ref{case:n<=2m})}
\end{equation*}

We also consider the next block of equations
\begin{align*}
0 &= p^\ell s_{d+1} +a_1s_d\\[-1ex]
& \;\; \vdots \\[-1ex]
0 &= p^\ell s_{k+1} +a_1s_{k}.
\end{align*}
Using that $\gcd(p,a_1)=1$, the last equation then gives $p^\ell\mid s_k$. This implies
\begin{equation*}
 p^{2\ell}\mid s_{k-1}\implies p^{3\ell}\mid s_{k-2}\implies \cdots \implies p^{(k-d+1)\ell}\mid{s_d},  
\end{equation*}
and therefore $g_d(a_1)\equiv 0 \pmod {p^{k\ell}}$. 

In conclusion, for every $k\ge d$ we have an integer $a_{1,k}$ such that 
\begin{equation*}
 g_d(a_{1,k})\equiv 0 \!\pmod {p^{k\ell}}.
\end{equation*}

By Lemma~\ref{rootinZp}, there exists $r\in\Z_p$ such that $g_d(r)=0$. Moreover, $v_p(a_{1,k})=0$ implies $v_p(r)=0$. In particular, $1/r\in\Z_p$. Finally, because of the relation between $f(x)$ and $g_d(x)$, it follows that $\varrho=-p^\ell/r\in\Z_p$ is a root of $f(x)$ with $v_p(\varrho)=\ell$.
\end{proof}

\section{Examples and further remarks}
\label{sec:further}

In this section, we intend to illustrate our results for the special cases of quadratic and cubic polynomials. We also discuss some particular  aspects of the factorization of integer polynomials over $\Z[[x]]$ and $\Z_p[x]$. 

Once again, the only difficulty is to understand the factorization properties of polynomials of the form \eqref{def:polynomial}. 
Thus we consider
\begin{equation}\label{def:deg<=3}
f(x)=p^n + p^m\gamma_1 x + \gamma_2 x^2 +\gamma_3 x^3, \quad |\gamma_2|+|\gamma_3|\not=0,
\end{equation}
with $p$ prime, $n\ge 2$, $m\geq 1$, $\gcd(p,\gamma_1)=1$ or $\gamma_1=0$, and $\gcd(p,\gamma_2,\gamma_3)=1$.  

If $n>2m$, everything is said in Proposition~\ref{case:n>2m}.

If $n\le 2m$, we can combine Proposition~\ref{prop:doubleroot}, Theorem~\ref{case:n<=2m}, and Proposition~\ref{rootg(x)} to formulate the following result. 

\begin{theorem}\label{thm:deg<=3}
Let $f(x)$ be a polynomial of the form \eqref{def:deg<=3} with $n\le 2m$ or $\gamma_1=0$. Then $f(x)$ admits a factorization 
\begin{equation*}
 f(x)=\big(p^\ell+a_1 x+a_2 x^2+\cdots\big)\big(p^{n-\ell}+b_1 x+b_2 x^2+\cdots\big) \text{ in } \Z[[x]]
\end{equation*}
with $1\le \ell\le n/2$ if and only if it has a root $\varrho\in\Z_p$ with $v_p(\varrho)=\ell\ge 1$.
\end{theorem}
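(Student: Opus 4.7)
The theorem is an ``if and only if'' that consolidates the three preceding results of Section~\ref{sec:Main}, so the proof is largely bookkeeping: one must verify that the standing hypotheses built into \eqref{def:deg<=3} (namely $\gcd(p,\gamma_1)=1$ or $\gamma_1=0$, and $\gcd(p,\gamma_2,\gamma_3)=1$), together with the hypothesis $n\le 2m$ or $\gamma_1=0$, match those needed by each cited statement.

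For the \emph{forward direction}, suppose $f$ has a root $\varrho\in\Z_p$ with $v_p(\varrho)=\ell\ge 1$. I would split on the multiplicity of $\varrho$. If $\varrho$ is a simple root, apply Theorem~\ref{case:n<=2m} directly: its hypothesis $n\le 2m$ is given, and in the case $\gamma_1=0$ the remark in Section~\ref{sec:preliminaries} lets me enlarge $m$ to place $f$ in that regime; the theorem then produces a factorization of exactly the shape demanded, with the parameter equal to $\ell=v_p(\varrho)$. If $\varrho$ is a multiple root, invoke Proposition~\ref{prop:doubleroot}: it yields $f=G\cdot f_{\textup{red}}$ in $\Z[x]$ with $p^\ell$ dividing both constant terms, and since the two constant terms multiply to $p^n$, one factor has constant term $\pm p^{\ell'}$ for some $\ell\le \ell'\le n/2$, giving a factorization of the required form.

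For the \emph{reverse direction}, suppose $f$ admits the stated factorization. Since $\gcd(p,\gamma_2,\gamma_3)=1$ is part of the definition \eqref{def:deg<=3}, Proposition~\ref{rootg(x)} applies and delivers a root $\varrho\in\Z_p$ with $v_p(\varrho)=\ell$, finishing the equivalence.

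\textbf{Main obstacle.} There is essentially no substantive obstacle: every hard step has been absorbed into the three ingredient results. The only care needed is to match the value of the exponent $\ell$ in the multiple-root case, where Proposition~\ref{prop:doubleroot} only guarantees $v_p(G(0))\ge \ell$ and $v_p(f_{\textup{red}}(0))\ge \ell$ rather than equality; choosing the smaller of the two constant-term valuations produces a valid $\ell$ in the interval $[1,n/2]$, and this same $\ell$ is realized as $v_p$ of a root of $f$ by Proposition~\ref{rootg(x)} applied to either factor. The other minor point is to explicitly invoke the remark from Section~\ref{sec:preliminaries} to reduce the $\gamma_1=0$ case to the hypothesis $n\le 2m$ of Theorem~\ref{case:n<=2m}.
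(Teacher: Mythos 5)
Your proposal is correct and takes essentially the same route as the paper, which proves Theorem~\ref{thm:deg<=3} precisely by combining Proposition~\ref{prop:doubleroot} for multiple roots, Theorem~\ref{case:n<=2m} for simple roots, and Proposition~\ref{rootg(x)} for the converse, with the $\gamma_1=0$ case absorbed via the remark of Section~\ref{sec:preliminaries}. Your added care about matching the exponent $\ell$ in the multiple-root case is a reasonable refinement of a point the paper only settles implicitly in its later explicit discussion of multiple roots.
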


As the polynomials $(7+7x+x^2)^2$  and $(7+7x+x^2)(7+7x+x^3)$ show, Theorem~\ref{thm:deg<=3} is not necessarily valid for polynomials of degree higher than 3. Observe that neither $7+7x+x^2$ nor $7+7x+x^3$ have roots in $\Z_7$. 

On the other hand, under the assumption that $f(x)$ has a root in $\Z_p$, it is indeed necessary to assume that its $p$-adic valuation be positive. For instance, the polynomial $7+21x+15x^2+x^3=(1+x)(7+14x+x^2)$ has $-1$ as root in $\Z_7$, but it does not admit a proper factorization in $\Z[[x]]$ since $1+x$ is a unit and $7+14x+x^2$ is irreducible in $\Z[[x]]$. Note that $v_7(-1)=0$.

As discussed in Section~\ref{sec:preliminaries}, the conditions $n\ge 2$ and $m\ge 1$ are necessary for Theorem~\ref{thm:deg<=3} to hold. In fact, any power series with $f(0)=1$ is a unit in $\Z[[x]]$, and any power series for which $f(0)$ is a prime number, is irreducible in $\Z[[x]]$. Moreover, if $\gcd(p,\gamma_1)=1$, any power series of the form $p^n+\gamma_1x+\cdots$, is irreducible in $\Z[[x]]$. However, the polynomial $p^n+\gamma_1x+\gamma_2x^2$ is reducible in $\Z_p[x]$ for every $\gamma_1,\gamma_2\in\Z$ with $\gcd(p,\gamma_1)=1$.

In other words, in general, the reducibility of polynomials in $\Z[[x]]$ does not follow from their reducibility in $\Z[x]$ or $\Z_p[x]$.  However, J.-P.~B{\'e}zivin \cite{Bezivin} claims that, in certain cases, the reducibility in $\Z[[x]]$ of a polynomial $p^n+\sum_{k=1}^d \!\alpha_k x^k$ is equivalent to their reducibility in $\Z_p[x]$. 

\medskip
We finish this section with a brief discussion about the factorization of quadratic and cubic polynomials in the presence of a multiple $p$-adic root.
\smallskip

\textbf{Multiple roots.}
As mentioned at the beginning of Section~\ref{sec:Main}, if $f(x)$ has a multiple root in $\Z_p$, then the factorization of $f(x)$ in $\Z[[x]]$ can be achieved with polynomial factors. We now proceed to illustrate how this factorization looks like for the cases at hand. Recall that if $f(x)$ has a multiple root, then its discriminant must be zero.

First, if $f(x)$ in \eqref{def:deg<=3} is quadratic and has a double root, then $p^{2m}\gamma_1^2-4p^n\gamma_2=0$. Hence $n=2m$, $\gamma_1$ is an even number, and
\[ f(x)=p^n + p^m\gamma_1 x + \gamma_2 x^2 = (p^m + \tfrac{\gamma_1}{2} x)^2. \]

Consider now $f(x)=p^n + p^m\gamma_1 x + \gamma_2 x^2 +\gamma_3 x^3$ as in \eqref{def:deg<=3} with $\gamma_3\not=0$, $n\le 2m$, and suppose that it has a double root, say $\varrho\in\Z_p$. Thus
\begin{equation*}
 p^n + p^m\gamma_1 \varrho + \gamma_2 \varrho^2 +\gamma_3 \varrho^3 = 0, \quad  
 p^m\gamma_1 + 2\gamma_2 \varrho + 3\gamma_3 \varrho^2 = 0,
\end{equation*}
and so $p$ must divide $\varrho^2(\gamma_2+\gamma_3\varrho)$ and $\varrho(2\gamma_2 + 3\gamma_3\varrho)$. Since $\gcd(p,\gamma_2,\gamma_3)=1$, we then conclude that $p$ divides $\varrho$. Let $\ell=v_p(\varrho)\ge 1$. Note that $n\le 2m$ implies $\ell\le m$ and $2\ell\le n$. Moreover, using that $9\gamma_3 f(\varrho)-(3\gamma_3\varrho+\gamma_2)f'(\varrho)=0$, we obtain
\begin{equation*}
\varrho = \frac{p^m\gamma_1\gamma_2-9p^n\gamma_3}{2(3p^m\gamma_1\gamma_3-\gamma_2^2)}\in \Q.
\end{equation*}

Write $\varrho=-b/a$ with $a,b\in\Z$ such that $\gcd(a,b)=1$. Then $a\mid \gamma_3$ and $b\mid p^n$, hence $b=p^\ell$ and $\varrho=-p^\ell/a$. We conclude that $f(x)$ admits the factorization 
\begin{equation*}
 f(x)=(p^\ell+ax)\big(p^{n-\ell} + (p^{m-\ell}\gamma_1-p^{n-2\ell}a)x + \tfrac{\gamma_3}{a} x^2\big),
\end{equation*}
which is a proper factorization of $f(x)$ in $\Z[[x]]$.


\end{document}